\newcommand{\e}{\text{e}}
\newtheorem{theorem}{Theorem}
\newtheorem{lemma}[theorem]{Lemma}
\newtheorem{corollary}[theorem]{Corollary}
\newdefinition{remark}{Remark}
\newdefinition{definition}{Definition}
\journal{Journal of Computational and Applied Mathematics}
\begin{document}

\begin{frontmatter}
	\title{An almost symmetric Strang splitting scheme for the construction of high order composition methods\tnoteref{label1}}
	\tnotetext[label1]{This work is supported by the Fonds zur F\"orderung der Wissenschaften (FWF) -- project id: P25346.}

	\author[uibk]{Lukas Einkemmer\corref{cor1}}
	\ead{lukas.einkemmer@uibk.ac.at}

	\author[uibk]{Alexander Ostermann}
	\ead{alexander.ostermann@uibk.ac.at}

	\address[uibk]{Department of Mathematics, University of Innsbruck, Austria}
	\cortext[cor1]{Corresponding author}

	\begin{abstract}
	In this paper we consider splitting methods for nonlinear ordinary differential equations in which one of the (partial) flows that results from the splitting procedure can not be computed exactly.
	{
{Instead, we insert a well-chosen state $y_{\star}$ into the corresponding nonlinearity $b(y)y$, which results in a linear term $b(y_{\star})y$ whose exact flow can be determined efficiently.}
Therefore, in the spirit of splitting methods, it is still possible for the numerical simulation to satisfy certain properties of the exact flow.}
 However, Strang splitting is no longer symmetric (even though it is still a second order method) and thus high order composition methods are not easily attainable. We will show that an iterated Strang splitting scheme can be constructed which yields a method that is symmetric up to a given order. This method can then be used to attain high order composition schemes. We will illustrate our theoretical results, up to order six, by conducting numerical experiments for a charged particle in an inhomogeneous electric field, a post-Newtonian computation in celestial mechanics, and a nonlinear population model and show that the methods constructed yield superior efficiency as compared to Strang splitting. For the first example we also perform a comparison with the standard fourth order Runge--Kutta methods and find significant gains in efficiency as well better conservation properties.
\end{abstract}

	\begin{keyword}
		splitting methods \sep non-symmetric Strang splitting\sep approximate partial flows \sep nonlinear ordinary differential equations, application to the sciences
		\MSC[2010] 65L05 
		\sep 65Z05 
	\end{keyword}
\end{frontmatter}

	\section{Introduction} \label{sec:introduction}
		If an ordinary differential equation can be cast in the form
		\[
			y' = A(y) + B(y),
		\]
		where the exact solutions of $y'=A(y)$, denoted by $\varphi_t^{A}(y(0))$, and $y'=B(y)$, denoted by $\varphi_t^{B}(y(0))$, are known, or can be computed efficiently, splitting methods often provide a viable alternative compared to more traditional integration schemes (such as Runge--Kutta methods). In addition, if the flows generated by $A$ and $B$ preserve a given property of the ordinary differential equation, so does the splitting scheme. In some instances this can be used to construct schemes which conserve certain properties of the exact flow (see, e.g.~\cite{hairer2006}). If the exact partial flows are used, the Strang splitting scheme with step size $\tau$, i.e.
		\[
			S_{\tau} = \varphi_{\frac{\tau}{2}}^A \circ \varphi_{\tau}^B \circ \varphi_{\frac{\tau}{2}}^A,
		\]
		is a symmetric scheme of second order. It is then possible to construct schemes of arbitrary (even) order by composition (see, e.g.~\cite{mclachlan1995}). For certain classes of ordinary differential equations more efficient schemes can be constructed (for the example of separable Hamiltonian systems see \cite{yoshida1990}). For a review of splitting methods we refer the reader to \cite{mclachlan:02}.

	However, even if one of the partial flows can not be computed exactly, in some circumstances splitting methods can still be applied. The systems of interest in this paper are ordinary differential equations which can be written as
	\begin{equation*} \label{eq:form}
		y' = A(y) + b(y)y + d,
	\end{equation*}
	where, as before, we assume that {$y'=A(y)$} can be solved exactly. However, no such assumption is made about {$y'=b(y)y+d$}. Instead, we assume that once a {fixed} value, say $y_{\star}$, is substituted, the flow corresponding to
	\[
		y' = b(y_{\star})y+d
	\]
	can be computed efficiently. We denote the corresponding flow by $\varphi_t^{b(y_{\star})}$, which can also be written explicitly by employing the exponential and $\phi_1$ function{s}. This yields
	\[
		\varphi_t^{b(y_{\star})}(y(0)) = \e^{t b(y_{\star})} y(0) + t \phi_1\left(t b(y_{\star})\right)d,
	\]
	where
	\[
		\phi_1(z)=\frac{\e^z-1}{z}.
	\]
	{Note that if we apply the Strang splitting scheme to $y'=A(y)+b(y_0)y+d$, a numerical methods results that is only of order $1$. This is intuitively clear, as $b$ is evaluated at the left endpoint only, and can be verified by a simple argument based on the Taylor expansion of the scheme. However, in the literature an alternative scheme has been proposed in the context of partial differential equations (see e.g.~\cite{cheng:1976}) that is usually referred to as Strang splitting also and is given by}
	\begin{subequations} \label{eq:standard_strang}
		\begin{equation}
		y_{1/2} = \varphi_{\frac{\tau}{2}}^{b(y_0)} \circ \varphi_{\frac{\tau}{2}}^A (y_0)
		\end{equation}
		\begin{equation}
		y_1 = M_{\tau} (y_0) = \varphi_{\frac{\tau}{2}}^A \circ \varphi_{\tau}^{b(y_{1/2})} \circ \varphi_{\frac{\tau}{2}}^A (y_0).
	\end{equation}
	\end{subequations}
	{Note that since we use an approximation of order $1$ to $b(y(\tau/2))$, this is in fact a method of order two. Consistent with the literature we will, from now on, refer to this scheme as Strang splitting.}
	
	For a symmetric scheme it must hold that (see, e.g.~\cite[Chap. II.3]{hairer2006})
	\[
	  M_{-\tau} \circ M_{\tau} = I,
	\]
	where $I$ denotes the identity. Now
	\[
		M_{-\tau} \circ M_{\tau}(y_0) =
		\varphi_{-\frac{\tau}{2}}^A \circ \varphi_{-\tau}^{b(\tilde{y}_{1/2})} \circ \varphi_{\tau}^{b(y_{1/2})} \circ \varphi_{\frac{\tau}{2}}^A (y_0),
	\]
	where
	\[
		\tilde{y}_{1/2} = \varphi_{-\frac{\tau}{2}}^{b(y_1)} \circ \varphi_{-\frac{\tau}{2}}^A(y_1).
	\]
	Inserting \eqref{eq:standard_strang} shows that
	\[
		\tilde{y}_{1/2} = \varphi_{-\frac{\tau}{2}}^{b(y_1)} \circ \varphi_{\tau}^{b(y_{1/2})} \circ \varphi_{\frac{\tau}{2}}^A (y_0).
	\]
	Therefore, the Strang splitting scheme is symmetric if and only if
	\[
		\varphi_{-\frac{\tau}{2}}^{b(y_1)} \circ \varphi_{\tau}^{b(y_{1/2})} =
		\varphi_{\frac{\tau}{2}}^{b(y_0)},
	\]
	which is not satisfied in general.
	
	Due to the lost symmetry, the corresponding triple jump scheme is only of order $3$ (not of order $4$, as one might naively expect). From this consideration it is also clear that further composition in the same manner does not result in schemes of arbitrary order (as is the case with the classical Strang splitting method {based on exact flows}).
	
	In section \ref{sec:almost_symmetric_strang}, we will propose a modified Strang splitting scheme that, in addition of being second order accurate, can be iterated to give a scheme that is symmetric up to a predetermined order $q$ {(as made precise in Definition~\ref{def:symmetric-order-y})}. Therefore, the usual construction of composition methods of arbitrary (even) order can be accomplished in this context (this is shown in section \ref{sec:composition}). In section \ref{sec:stiff} we show that for certain stiff problems the schemes constructed in this paper can be employed as well. In addition, we discuss in some detail the numerical results for three examples (namely for a charged particle in an inhomogeneous electric field, a post-Newtonian computation of celestial mechanics, and a nonlinear population model) in section \ref{sec:application}. Finally, we conclude in section \ref{sec:conclusion}.

  	\section{An almost symmetric Strang splitting scheme} \label{sec:almost_symmetric_strang}

	Let us start from the Lie splitting scheme
	\begin{equation} \label{eq:lie1}
		y_{1/2} = L_{\frac{\tau}{2}}(y_0) = \varphi_{\frac{\tau}{2}}^{b(y_0)} \circ \varphi_{\frac{\tau}{2}}^A( y_0).
	\end{equation}
	{We recall that the adjoint of a scheme $L_{\tau}$, which we denote by $L_{\tau}^{*}$, is defined as $L_{\tau}^{*}=L_{-\tau}^{-1}$. Therefore, to give a representation of the adjoint scheme corresponding to \eqref{eq:lie1} we interchange $y_{1/2}$ with $y_0$ and $\tau$ with $-\tau$. This yields}
	\begin{equation} \label{eq:fixed_point_eq}
		y_1 = \varphi_{\frac{\tau}{2}}^A \circ \varphi_{\frac{\tau}{2}}^{b(y_1)}( y_{1/2}),
	\end{equation}
	i.e. $y_1 = L^{*}_{\frac{\tau}{2}}(y_{1/2})$. Now the (implicit) Strang splitting scheme
	\[
		S_{\tau} = L^{*}_{\frac{\tau}{2}} \circ L_{\frac{\tau}{2}}
	\]
	is of second order and symmetric by construction (see, e.g. \cite{hairer2006}). However, it would require the solution of an implicit equation in each step, which is prohibitively expensive. Since equation \eqref{eq:fixed_point_eq} has the form of a fixed-point problem, we can employ fixed-point iteration to approximate $L^{*}_{\frac{\tau}{2}}$. We will denote the resulting scheme by $S^{(i)}_{\tau}$, where $i$ is the number of iterations that are conducted. {Note that during the iteration $y_{1/2}$ is fixed; that is, only the two evolution operators given explicitly in equation \eqref{eq:fixed_point_eq} are applied at each step in the fixed-point iteration.} As an initial value for the fixed-point iteration we employ $y_{1/2}$ (however, any approximation of order $\tau$ to $y_1$ would constitute a possible choice) and therefore {
	\[
		S^{(1)}_{\tau}(y_0)= \varphi_{\frac{\tau}{2}}^A \circ \varphi_{\frac{\tau}{2}}^{b(y_{1/2})} \big(\underset{y_{1/2}}{\underbrace{\varphi_{\frac{\tau}{2}}^{b(y_0)} \circ \varphi_{\frac{\tau}{2}}^A(y_0)}}\big)
	\]
}
	Clearly we need at least two iterations such that the scheme is of second order.
	There is no hope that $S^{(i)}_{\tau}$ is symmetric. However, we will show that it is almost symmetric as defined below.
	\begin{definition} \label{def:symmetric-order-y}
		A one-step method $\Phi_{\tau}$ is \emph{symmetric of order} $q$ if
		\begin{equation} \label{eq:symmetric_of_order_ass}
			\Phi^{*}_{\tau} = \Phi_{\tau} + \mathcal{O}\left( \tau^{q+1} \right),
		\end{equation}
		where $\Phi^{*}_{\tau}$ is the adjoint method of $\Phi_{\tau}$.
	\end{definition}
	
	Next let us show that the fixed-point iteration described above actually yields a scheme that is symmetric of order $i$.
	\begin{theorem} \label{thm:iteration} Suppose that $b(\cdot)$ is Lipschitz continuous. Then the Strang splitting scheme $S^{(i)}_{\tau}$ is symmetric of order $i$.
	\end{theorem}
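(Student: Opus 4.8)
The plan is to show that the fixed-point iteration $S^{(i)}_\tau$ agrees with the (symmetric) implicit Strang scheme $S_\tau = L^{*}_{\tau/2}\circ L_{\tau/2}$ up to terms of order $\tau^{i+1}$, and then to transfer symmetry from $S_\tau$ to $S^{(i)}_\tau$. Concretely, write $y_1 = L^{*}_{\tau/2}(y_{1/2})$ for the exact solution of the fixed-point equation \eqref{eq:fixed_point_eq} and $y_1^{(k)}$ for the $k$-th iterate, with $y_1^{(0)} = y_{1/2}$. Since $y_{1/2} = y_0 + \mathcal{O}(\tau)$ and the true solution $y_1 = y_0 + \mathcal{O}(\tau)$, the initial error is $y_1^{(0)} - y_1 = \mathcal{O}(\tau)$.

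First I would establish a contraction-type estimate for the fixed-point map $G_\tau(z) = \varphi_{\tau/2}^A \circ \varphi_{\tau/2}^{b(z)}(y_{1/2})$. The only $z$-dependence is through $b(z)$, which enters the linear flow $\varphi_{\tau/2}^{b(z)}(y_{1/2}) = \e^{(\tau/2) b(z)} y_{1/2} + (\tau/2)\phi_1((\tau/2)b(z))\,d$; differentiating the exponential and $\phi_1$ in the matrix argument and using that $b$ is Lipschitz, one gets $\|G_\tau(z) - G_\tau(w)\| \le C\tau \|z - w\|$ for $\tau$ small, on a neighbourhood of $y_0$ of fixed size. Hence each iteration gains one power of $\tau$: $\|y_1^{(k)} - y_1\| \le (C\tau)^k \|y_1^{(0)} - y_1\| = \mathcal{O}(\tau^{k+1})$, so $S^{(i)}_\tau(y_0) = S_\tau(y_0) + \mathcal{O}(\tau^{i+1})$. (One should also note this comparison holds with the same big-$\mathcal{O}$ after composing with $\varphi_{\tau/2}^A$, which is Lipschitz uniformly in $\tau$, so the final output error is still $\mathcal{O}(\tau^{i+1})$; since $S_\tau$ is second order, $S^{(i)}_\tau$ is at least of order $\min(i,2)$, confirming the remark that $i\ge 2$ is needed.)

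Next I would pass to the adjoints. The adjoint is defined by $\Phi^{*}_\tau = \Phi_{-\tau}^{-1}$. The key point is that the relation $S^{(i)}_\tau = S_\tau + \mathcal{O}(\tau^{i+1})$ is preserved under taking adjoints: replacing $\tau$ by $-\tau$ gives $S^{(i)}_{-\tau} = S_{-\tau} + \mathcal{O}(\tau^{i+1})$, and inverting a one-step method that differs from another by $\mathcal{O}(\tau^{i+1})$ — both being $\mathcal{O}(\tau)$-perturbations of the identity with uniformly invertible (indeed Lipschitz) derivative — again yields two maps differing by $\mathcal{O}(\tau^{i+1})$; this is a standard implicit-function / Lipschitz-inverse estimate. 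Hence $(S^{(i)}_\tau)^{*} = (S_\tau)^{*} + \mathcal{O}(\tau^{i+1})$. But $S_\tau = L^{*}_{\tau/2}\circ L_{\tau/2}$ is symmetric by construction, i.e. $(S_\tau)^{*} = S_\tau$. Combining,
\[
	(S^{(i)}_\tau)^{*} = S_\tau + \mathcal{O}(\tau^{i+1}) = S^{(i)}_\tau + \mathcal{O}(\tau^{i+1}),
\]
which is exactly Definition~\ref{def:symmetric-order-y} with $q = i$.

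The main obstacle I anticipate is making the contraction estimate fully rigorous while tracking that the constant $C$ and the neighbourhood on which it holds are uniform in $\tau$ (so that all iterates stay in the region where the Lipschitz bounds apply) — in particular verifying that $z\mapsto \e^{(\tau/2)b(z)}$ and $z\mapsto \phi_1((\tau/2)b(z))$ are Lipschitz with constant $\mathcal{O}(\tau)$, which follows from the integral representation $\e^M - \e^N = \int_0^1 \e^{sM}(M-N)\e^{(1-s)N}\,ds$ (and the analogous bound for $\phi_1$) together with local boundedness of $b$. The adjoint/inversion step is comparatively routine once one has a clean statement that composing and inverting $\mathcal{O}(\tau)$-close-to-identity maps respects $\mathcal{O}(\tau^{i+1})$ differences; I would state this as a short lemma and invoke it twice. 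A minor subtlety worth a sentence is that the big-$\mathcal{O}$ here is in the sense of asymptotic expansions of the numerical flow (smooth dependence on $\tau$ and $y_0$), which is what the later composition arguments in Section~\ref{sec:composition} require, rather than a mere norm bound; given smoothness of $A$, $b$, and the $\phi_1$ function this causes no difficulty.
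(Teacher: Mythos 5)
Your proposal is correct and follows essentially the same route as the paper: show the fixed-point map has Lipschitz constant $\mathcal{O}(\tau)$ (you via the explicit $\e^{(\tau/2)b(z)}$/$\phi_1$ representation, the paper via the variation-of-constants formula), conclude $S^{(i)}_{\tau}=S_{\tau}+\mathcal{O}(\tau^{i+1})$ with the implicit, symmetric Strang scheme, and transfer symmetry. If anything, you are more explicit than the paper on the final step (that $\mathcal{O}(\tau^{i+1})$-closeness is preserved under passing to adjoints), which the paper leaves implicit after invoking Banach's fixed-point theorem for well-posedness of $S_{\tau}$.
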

	\begin{proof}
We have to show that the fixed-point problem~\eqref{eq:fixed_point_eq}, i.e., $y=F(y)$ with
\[
			F(y) =  \varphi_{\frac{\tau}{2}}^A \circ \varphi_{\frac{\tau}{2}}^{b(y)}(y_{1/2})
\]
has a unique solution in a sufficiently small neighborhood of $y_{1/2}$. First note that there exists a constant $C>0$ such that
\[
\| F(y_{1/2})-y_{1/2}\| \le C\tau
\]
for $\tau$ sufficiently small. Now, let $D=2C$ and denote by $\Omega_D$ the closed ball with center $y_{1/2}$ and radius $D\tau$. Then, for all $u, v \in \Omega_D$ it holds
\begin{equation}\label{eq:lip-bound1}
\|F(u)-F(v)\| \le \left\| \varphi_{\frac{\tau}{2}}^A\right\| \cdot \left\|\varphi_{\frac{\tau}{2}}^{b(u)}(y_{1/2})	- \varphi_{\frac{\tau}{2}}^{b(v)}(y_{1/2})\right\|,
\end{equation}
where $\Vert \varphi_{\frac{\tau}{2}}^A \Vert = 1+ \mathcal O(\tau)$ denotes the Lipschitz constant of $\varphi_{\frac{\tau}{2}}^A (\cdot)$ on the bounded set
\[
\Omega = \bigcup_{y\in \Omega_D} \left\{ \varphi_{\frac{\tau}{2}}^{b(y)}(y_{1/2})\right\}.
\]
Further note that
\begin{equation}\label{eq:lip-bound2}
\left\|\varphi_{\frac{\tau}{2}}^{b(u)}(y_{1/2})	- \varphi_{\frac{\tau}{2}}^{b(v)}(y_{1/2})\right\| \le \frac{\tau}2 \bigl(1+\mathcal O(\tau)\bigr) \|b(u)-b(v)\| \|y_{1/2}\|,
\end{equation}
which is a direct consequence of the variation-of-constants formula. By combining the bounds~\eqref{eq:lip-bound1}, \eqref{eq:lip-bound2} with the Lipschitz continuity of $b(\cdot)$, we obtain that $F$ is Lipschitz continuous on $\Omega_D$ with a Lipschitz constant of order $\tau$. Moreover, using the triangle inequality we get the bound
\[
\|F(y) - y_{1/2}\| \le \|F(y)-F(y_{1/2})\| + \|F(y_{1/2}) - y_{1/2}\| \le D\tau
\]
for all $y\in \Omega_D$ and $\tau$ sufficiently small. This shows that $F$ maps the closed ball $\Omega_D$ onto itself. Consequently, by Banach's fixed-point theorem, $F$ has a unique fixed-point $y_1$ in $\Omega_D$, which is the locally unique solution of~\eqref{eq:fixed_point_eq}.

Since $S^{(1)}_{\tau}(y_0) = F(y_{1/2})$ and $S_{\tau}(y_0)=F(y_1)$, we also obtain that
\[
\left\|S^{(1)}_{\tau}(y_0) - S_{\tau}(y_0)\right\| = \|F(y_{1/2}) - F(y_1)\| \le L\tau \|y_{1/2}-y_1\| \le LD\tau^2.
\]
Moreover, as the Lipschitz constant of $F$ is of order $\tau$ this implies
\begin{equation}\label{eq:last}
		S^{(i)}_{\tau}(y_0) = S_{\tau}(y_0) + \mathcal{O}(\tau^{i+1}).
\end{equation}
Also recall that $S_{\tau}$ is a symmetric scheme by construction. Thus, \eqref{eq:last} proves the desired result.
	\end{proof}
	
	Thus, we have established that we can iteratively compute a second order method that is symmetric to arbitrary order. Moreover, the computational effort is linear in the desired order of symmetry.
	
	In the next section we will discuss how the scheme described here can be used to construct composition methods of arbitrary (even) order.
	
	\section{Composition methods} \label{sec:composition}
	
	It is well-known (see, e.g.~\cite[Chap. II.4]{hairer2006}) that if a symmetric one-step method $\Phi_{\tau}$ of even order {$r$} is composed in the following manner
	\begin{equation} \label{eq:composition1}
		\Phi_{\gamma_3 \tau} \circ \Phi_{\gamma_2 \tau} \circ \Phi_{\gamma_1 \tau},
	\end{equation}
	where
	\begin{equation}\label{eq:composition2}
		\gamma_1 = \gamma_3 = \frac{1}{2-2^{1/({r}+1)}}, \quad
		\qquad \gamma_2 = -2^{1/({r}+1)} \gamma_1,
	\end{equation}
	then a one-step method of order ${r}+2$ results. Thus, we can construct methods of arbitrary even order {$p$}, where the cost, in terms of a single evaluation of the corresponding second order method, is given by $3^{p/2-1}$. For $p=4$, for example, the corresponding method is the well-known triple jump scheme.
	
	The justification for this procedure is given by Theorem 4.1 in \cite{hairer2006}. We will now generalize that result for methods that are (only) symmetric of order $q$.
	\begin{lemma} \label{thm:generalization_almost_symmetric}
		Suppose that the one-step method $\Phi_{\tau}$ is of \emph{odd} order $p$ and symmetric of order $q$ with $q\geq p+1$, {see \eqref{eq:symmetric_of_order_ass}.} Then, the method is in fact of order $p+1$.
	\end{lemma}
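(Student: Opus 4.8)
The plan is to adapt the classical argument showing that symmetric one-step methods have even order (see \cite[Chap.~II.3]{hairer2006}) and to verify that weakening exact symmetry to ``symmetric of order $q$'' with $q\ge p+1$ costs nothing. Throughout I assume, as elsewhere in the paper, that the vector field and the method are smooth enough for the usual local error expansions to hold. First I would write down the consequence of $\Phi_{\tau}$ being of order $p$: there is a smooth function $C(\cdot)$ such that
\[
\Phi_{\tau}(y) = \varphi_{\tau}(y) + C(y)\,\tau^{p+1} + \mathcal{O}(\tau^{p+2}),
\]
where $\varphi_{\tau}$ denotes the exact flow.

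Next I would invoke the standard fact that the adjoint method $\Phi_{\tau}^{*}$ is again of order $p$ and that its leading error coefficient is $(-1)^{p}C(\cdot)$, i.e.
\[
\Phi_{\tau}^{*}(y) = \varphi_{\tau}(y) + (-1)^{p} C(y)\,\tau^{p+1} + \mathcal{O}(\tau^{p+2}).
\]
This follows from $\Phi_{\tau}^{*}=\Phi_{-\tau}^{-1}$ combined with $\partial\varphi_{\tau}/\partial y = I + \mathcal{O}(\tau)$, exactly as in the proof of the corresponding theorem in \cite[Chap.~II.3]{hairer2006}; only the parity of $p$ enters the sign.

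The hypothesis is then used in a single step. By \eqref{eq:symmetric_of_order_ass} together with $q\ge p+1$ we have $\Phi_{\tau}^{*}(y) - \Phi_{\tau}(y) = \mathcal{O}(\tau^{q+1}) = \mathcal{O}(\tau^{p+2})$, so the coefficients of $\tau^{p+1}$ in the two expansions above must coincide, giving $(-1)^{p}C(y) = C(y)$. Since $p$ is \emph{odd}, $(-1)^{p}=-1$, hence $C(y)\equiv 0$, and substituting back yields $\Phi_{\tau}(y)=\varphi_{\tau}(y)+\mathcal{O}(\tau^{p+2})$; that is, $\Phi_{\tau}$ is of order $p+1$.

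There is no genuine difficulty here; the one point requiring care is the exponent count in the last step. The inequality $q\ge p+1$ is precisely what makes the discrepancy $\tau^{q+1}$ between $\Phi_{\tau}$ and $\Phi_{\tau}^{*}$ negligible against the $\tau^{p+2}$ remainder, so that the sign relation $(-1)^{p}C=C$ can be extracted at order $p+1$. It is also worth noting, as in the classical even-order theorem, that for even $p$ the two leading error coefficients agree automatically and nothing is gained, which is consistent with the lemma being formulated for odd $p$ only.
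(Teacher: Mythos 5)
Your proposal is correct and follows essentially the same argument as the paper: expand the local error of $\Phi_{\tau}$ and its adjoint, note the leading coefficients are $C$ and $(-1)^{p}C$, and use the assumption \eqref{eq:symmetric_of_order_ass} with $q\geq p+1$ to force $C\equiv 0$ for odd $p$. No discrepancies to report.
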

	\begin{proof}
		Let us denote the exact flow by $\varphi_\tau$. Since the method has order $p$,
		\[
			\Phi_{\tau}(y_{0})-\varphi_{\tau}(y_{0})=C(y_{0})\tau^{p+1}+\mathcal{O}\left(\tau^{p+2}\right)
 		\]
		and further the adjoint method satisfies
		\[
			 \Phi_{\tau}^{*}(y_{0})-\varphi_{\tau}(y_{0})=(-1)^{p}C(y_{0})\tau^{p+1}+\mathcal{O}\left(\tau^{p+2}\right).
 		\]
		Using now assumption \eqref{eq:symmetric_of_order_ass}{, with $p$ odd,} we get
		\[
			 (-1)^{p}C(y_{0})\tau^{p+1}=C(y_{0})\tau^{p+1}+\mathcal{O}(\tau^{q+1})+\mathcal{O}\left(\tau^{p+2}\right)
 		\]
		and thus $C(y_{0})=0$ if $q\geq p+1$. Therefore, we deduce that $\Phi_{\tau}$ is of order $p+1$.
	\end{proof}
	
	As a corollary we get the desired order for the composition methods as well as the number of iterations we have to perform. As we will see in section \ref{sec:chargedparticle} this is a worst case estimate that can be improved upon for some applications.
	
	\begin{corollary} The composition method constructed from $S^{(i)}_{\tau}$ by using $\ell$ compositions, as described in equation \eqref{eq:composition1}, results in a scheme of order $p = 2+2\ell$ if $i \geq p$.
	\end{corollary}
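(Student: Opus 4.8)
The plan is to argue by induction on the number $k$ of triple-jump compositions that have already been carried out, keeping track of two quantities simultaneously: the classical order $r_k$ of the resulting method and its order of symmetry in the sense of Definition~\ref{def:symmetric-order-y}. The induction hypothesis is that after $k$ steps the method $\Phi^{(k)}_{\tau}$ has even order $r_k = 2 + 2k$ and is symmetric of order $i$. For $k=0$ this is precisely Theorem~\ref{thm:iteration} applied to $\Phi^{(0)}_\tau = S^{(i)}_{\tau}$, together with the fact that $i \ge p \ge 2$ guarantees that $S^{(i)}_\tau$ is of second order.

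For the inductive step I would first show that the composition \eqref{eq:composition1}--\eqref{eq:composition2}, applied to a method $\Phi_{\tau}$ that is symmetric of order $i$, again produces a method symmetric of order $i$, \emph{irrespective of its classical order}. This relies only on $\gamma_1 = \gamma_3$: writing $\Psi_{\tau} = \Phi_{\gamma_3\tau}\circ\Phi_{\gamma_2\tau}\circ\Phi_{\gamma_1\tau}$, one has $\Psi_{\tau}^{*} = \Phi^{*}_{\gamma_1\tau}\circ\Phi^{*}_{\gamma_2\tau}\circ\Phi^{*}_{\gamma_3\tau} = \Phi^{*}_{\gamma_3\tau}\circ\Phi^{*}_{\gamma_2\tau}\circ\Phi^{*}_{\gamma_1\tau}$, and since each factor satisfies $\Phi^{*}_{\gamma_j\tau} = \Phi_{\gamma_j\tau} + \mathcal{O}(\tau^{i+1})$ by \eqref{eq:symmetric_of_order_ass}, while the remaining factors are Lipschitz with constant $1 + \mathcal{O}(\tau)$, one obtains $\Psi_{\tau}^{*} = \Psi_{\tau} + \mathcal{O}(\tau^{i+1})$, i.e.\ $\Psi_\tau$ is symmetric of order $i$.

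Next I would invoke the classical order-raising argument (Theorem 4.1 in \cite{hairer2006}): since $\gamma_1+\gamma_2+\gamma_3 = 1$ and $\gamma_1^{r_k+1}+\gamma_2^{r_k+1}+\gamma_3^{r_k+1}=0$ by \eqref{eq:composition2}, the leading $\tau^{r_k+1}$ term of the local error of $\Phi^{(k)}_\tau$ is annihilated, so $\Phi^{(k+1)}_{\tau}$ has order at least $r_k+1$, which is \emph{odd}. Applying Lemma~\ref{thm:generalization_almost_symmetric} with $p = r_k+1$ and $q = i$ — legitimate, since we have just shown $\Phi^{(k+1)}_\tau$ to be symmetric of order $i$ — yields that $\Phi^{(k+1)}_{\tau}$ is in fact of order $r_k+2 = r_{k+1}$, provided $i \ge (r_k+1)+1 = 2k+4$. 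This closes the induction.

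Finally I would collect the constraints: the step $k\to k+1$ requires $i \ge 2k+4$, and these must hold for $k = 0,1,\dots,\ell-1$; the strongest is the one for $k=\ell-1$, namely $i \ge 2\ell+2 = p$. Hence, under the hypothesis $i \ge p$, every inductive step goes through and $\Phi^{(\ell)}_{\tau}$ has order $r_\ell = 2+2\ell = p$. I expect the only genuinely delicate point to be the stability of the ``symmetric of order $i$'' property under composition: one must check that the $\mathcal{O}(\tau^{i+1})$ remainders of the individual adjoint factors are not amplified when pre- and post-composed with the other factors, which is where the Lipschitz bounds $1+\mathcal{O}(\tau)$ (ultimately inherited from the Lipschitz continuity of $b(\cdot)$ via Theorem~\ref{thm:iteration}) enter; the remainder is routine bookkeeping on top of Lemma~\ref{thm:generalization_almost_symmetric} and the standard order-raising identity.
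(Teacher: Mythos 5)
Your proposal is correct and follows essentially the same route as the paper: induction over the compositions, using the classical order-raising identity to reach an odd order, Lemma~\ref{thm:generalization_almost_symmetric} (with the symmetry order supplied by Theorem~\ref{thm:iteration}) to gain the extra order, and the fact that the palindromic composition ($\gamma_1=\gamma_3$) preserves the ``symmetric of order $i$'' property. You merely spell out more explicitly what the paper asserts without proof (the adjoint computation $\Psi_\tau^{*}=\Phi_{\gamma_1\tau}^{*}\circ\Phi_{\gamma_2\tau}^{*}\circ\Phi_{\gamma_3\tau}^{*}$ together with the Lipschitz stability, and the bookkeeping showing the binding constraint is $i\ge 2\ell+2=p$), which is consistent with, not different from, the paper's argument.
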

	\begin{proof}
		If $i\geq 2$ then $S^{(i)}_{\tau}$ is of order $2$ by construction. Thus, the composition method is at least of order $3$. However, from Lemma \ref{thm:generalization_almost_symmetric} we know that if $i\geq 4$ this method is in fact of order $4$. Since the composition given in \eqref{eq:composition1} is symmetric, a method which is symmetric of order $q$ retains this property if composed in the manner described. Therefore, we can complete the proof by induction.
	\end{proof}
  	
		Before we turn our attention to the applications given in the next section, let us investigate the (worst case) computational cost of the composition methods considered in this section. In Table \ref{tbl:effort} the number of computations of either $\varphi_{\tau}^A$ or $\varphi_{\tau}^{b(q_{\star})}$ is given for the triple jump scheme as well as the composition of the triple jump scheme (which we call composite 9). In addition, Table \ref{tbl:effort} lists an abbreviation of all the schemes discussed (which we will employ heavily in the next section). The methods constructed here will be referred to as \emph{iterated}.
	
	\begin{table}[H]
		\caption{The effort in number of (possibly approximated) partial flows that have to be computed is listed for a number of composition schemes. In addition, the abbreviations used for the composition methods employed in the next section are given. \label{tbl:effort}}
		\vspace{0.2cm}
		\centering
		{\small
		\begin{tabular}{ l l l l l }
			Method & Abbreviation & Order & Iterations & Effort \\
			\hline
  			Strang \eqref{eq:standard_strang} & S & 2 & - & 4 \\
  			Iterated Strang & IS		& 2 & 2 & 6 \\
			\hline
			Triple jump & TJ				& 3 & - & 12 \\
  			Iterated triple jump & ITJ		& 4 & 4 & 30 \\  	
  			\hline
  			Composite 9 & C9				& 3 & - & 36 \\
  			Iterated composite 9 & IC9		& 6 & 6 & 126  \\  	  					
		\end{tabular}
		}
	\end{table}  	
  	
	Note that even though the high order methods given in Table \ref{tbl:effort} are about three times as costly as conventional composition methods (which are employed for separable Hamiltonian systems, for example), we are now able to construct methods of arbitrary (even) order. We will show in section~\ref{sec:application} that for realistic problems this can still result in a considerable gain in performance (as compared to the more commonly employed Strang splitting scheme, for example).
	
	In addition, it should be duly noted that similar to conventional composition methods, the schemes introduced here conserve all invariants that are invariants of the two partial flows as well.	
	To conclude this section, let us remark that the schemes introduced here do not require any modification in the code used to implement the numerical solution of the partial flows. That is, if for a given problem the Lie or Strang splitting scheme is already implemented, the generalization to the methods discussed here is almost immediate.
  	
	\section{Extension to stiff problems \label{sec:stiff}}
	
	In this section we show that in certain circumstances we can extend our analysis to the stiff case. Let us consider, for example, an ordinary differential equation for which the {operator $b$}, as defined in section \ref{sec:introduction}, can be written as{
	\begin{equation} \label{eq:stiff_nonstiff}
		b(y) = b_S + b_N(y),
	\end{equation}}%
i.e., the nonlinear operator $b$ can be split in a stiff linear part and a non-stiff nonlinear part. In this case we can show that the speed of convergence of the fixed-point iteration in  Theorem \ref{thm:iteration} (see section \ref{sec:almost_symmetric_strang}) is independent of the stiff part. This is the content of the following corollary.
	\begin{corollary} Suppose that $b(\cdot)$ can be cast into the form \eqref{eq:stiff_nonstiff} with $b_N(\cdot)$ Lipschitz continuous. Then the Strang splitting scheme $S_{\tau}^{(i)}$ is symmetric of order $i$ and the error of the method can be estimated independently of $\Vert b_S \Vert$.
	\end{corollary}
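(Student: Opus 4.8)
The assertion that $S^{(i)}_{\tau}$ is symmetric of order $i$ is already covered by Theorem~\ref{thm:iteration}: writing $b(\cdot)=b_S+b_N(\cdot)$ as in \eqref{eq:stiff_nonstiff} with $b_S$ constant and $b_N(\cdot)$ Lipschitz shows that $b(\cdot)$ is itself Lipschitz continuous, so that theorem applies unchanged. The genuinely new point, announced in the paragraph preceding the corollary, is that the rate of convergence of the fixed-point iteration, and therefore every error constant produced in the proof of Theorem~\ref{thm:iteration}, can be chosen independently of $\|b_S\|$. The plan is to re-examine that proof and to bound each $b$-dependent quantity uniformly in $\|b_S\|$; the decisive observation is that the stiff part cancels in the difference $b(u)-b(v)=b_N(u)-b_N(v)$, which is precisely the quantity controlling the Lipschitz constant of the fixed-point map $F$.

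First I would make explicit the structural hypothesis on $b_S$ that is implicit in the phrase ``stiff linear part'': the semigroup $t\mapsto \e^{tb_S}$ is bounded by a constant $M$ not depending on $\|b_S\|$ (e.g.\ $M=1$ when $b_S$ is dissipative). Since $b_N(\cdot)$ is Lipschitz, hence locally bounded by some $\beta$ on the relevant set, $b(y)=b_S+b_N(y)$ is a bounded perturbation of $b_S$, so that $\|\e^{tb(y)}\|\le M\e^{t\beta}$ and $\|\phi_1(tb(y))\|=\|\int_0^1\e^{\theta t b(y)}\,d\theta\|\le M\e^{t\beta}$ for $0\le t\le\tau$ and $\tau$ bounded, all uniformly in $\|b_S\|$. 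Consequently $\|\varphi^{b(y)}_{\tau/2}(y_{1/2})\|$, $\|y_{1/2}\|$ and the factor $\|\varphi^A_{\tau/2}\|=1+\mathcal O(\tau)$ in \eqref{eq:lip-bound1} are bounded independently of $\|b_S\|$ (the last because $\varphi^A$ does not involve $b$ at all).

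The heart of the argument is the $\|b_S\|$-uniform replacement of \eqref{eq:lip-bound2}. Using the identity $\e^{X}-\e^{Y}=\int_0^1 \e^{sX}(X-Y)\e^{(1-s)Y}\,ds$ with $X=\tfrac{\tau}{2} b(u)$, $Y=\tfrac{\tau}{2} b(v)$ and $X-Y=\tfrac{\tau}{2}(b_N(u)-b_N(v))$, together with the semigroup bounds above, one obtains
\[
\bigl\|\e^{\frac{\tau}{2} b(u)}-\e^{\frac{\tau}{2} b(v)}\bigr\|\le \tfrac{\tau}{2}\,M^2\e^{\tau\beta}\,\|b_N(u)-b_N(v)\|,
\]
and the analogous estimate for the $\phi_1$ term; inserting these into the explicit form $\varphi^{b(y)}_{\tau/2}(y_{1/2})=\e^{\frac{\tau}{2} b(y)}y_{1/2}+\tfrac{\tau}{2}\phi_1(\tfrac{\tau}{2} b(y))d$ yields $\|\varphi^{b(u)}_{\tau/2}(y_{1/2})-\varphi^{b(v)}_{\tau/2}(y_{1/2})\|\le C\tau\|b_N(u)-b_N(v)\|$ with $C$ depending only on $M$, $\beta$, $\|y_{1/2}\|$ and $\|d\|$. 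Combined with \eqref{eq:lip-bound1} and the Lipschitz continuity of $b_N$, this shows that $F$ is a contraction on a closed ball around $y_{1/2}$ with Lipschitz constant of order $\tau$, uniformly in $\|b_S\|$. From here the proof of Theorem~\ref{thm:iteration} proceeds as before: Banach's fixed-point theorem gives the locally unique $y_1$, the iteration converges geometrically at a $\|b_S\|$-independent rate, and hence $S^{(i)}_{\tau}(y_0)=S_{\tau}(y_0)+\mathcal O(\tau^{\,i+1})$ with a constant independent of $\|b_S\|$. Together with the fact that, under the same semigroup hypothesis, the very same perturbation estimates applied to the variation-of-constants representation of the local error of the exact symmetric scheme $S_{\tau}$ show that its error is independent of $\|b_S\|$ as well, the full error bound for $S^{(i)}_{\tau}$ is free of $\|b_S\|$.

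I expect the main obstacle to be the uniform exponential estimate in the third step: having cancelled $b_S$ inside the \emph{difference} $b(u)-b(v)$, one must make sure it does not reappear through the \emph{factors} $\e^{sX}$ and $\e^{(1-s)Y}$, and this is exactly where the bounded-semigroup assumption on $b_S$ is indispensable, since a bounded perturbation of a semigroup with bound $M$ retains a bound $M\e^{t\beta}$ with $\beta=\|b_N\|$ independent of $\|b_S\|$. A secondary subtlety worth checking is the size of the initial residual $\|F(y_{1/2})-y_{1/2}\|$ in the genuinely stiff regime: if a boundary layer makes it of order $1$ rather than of order $\tau$, the radius of the invariant ball must be enlarged accordingly, which lowers the power of $\tau$ in the iteration-error bound by one but leaves its $\|b_S\|$-independence — the point of the corollary — intact.
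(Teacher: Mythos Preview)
Your proposal is correct and follows essentially the same strategy as the paper: observe that $b(u)-b(v)=b_N(u)-b_N(v)$ so the stiff part cancels in the difference, derive a $\|b_S\|$-free replacement of \eqref{eq:lip-bound2}, and then invoke the remainder of the proof of Theorem~\ref{thm:iteration} verbatim. The only notable difference is the tool used for the key estimate: the paper applies the variation-of-constants formula directly to obtain
\[
\varphi_{\frac{\tau}{2}}^{b(u)}(y_{1/2}) - \varphi_{\frac{\tau}{2}}^{b(v)}(y_{1/2}) = \int_0^{\frac{\tau}{2}} \varphi_{\frac{\tau}{2}-\sigma}^{b_S} \bigl( b_N(u)y_{1/2} - b_N(v)y_{1/2} \bigr)\,\mathrm{d}\sigma,
\]
so that the bounding constant depends on $\|\varphi_{\tau/2}^{b_S}\|$ rather than on $\|b_S\|$, whereas you use the Duhamel-type identity $\e^{X}-\e^{Y}=\int_0^1 \e^{sX}(X-Y)\e^{(1-s)Y}\,ds$ combined with a bounded-perturbation argument for the semigroups $\e^{tb(u)}$, $\e^{tb(v)}$. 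Your route is slightly more explicit about the underlying semigroup hypothesis (boundedness of $\e^{tb_S}$ independent of $\|b_S\|$) and handles the $\phi_1$ term with $d\ne 0$ as well, but the substance is the same.
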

	\begin{proof} We employ the variation-of-constants formula to get
		\[
			\varphi_{\frac{\tau}{2}}^{b(u)}(y_{1/2}) - \varphi_{\frac{\tau}{2}}^{b(v)}(y_{1/2}) = \int_0^{\frac{\tau}{2}} {\varphi_{\frac{\tau}{2}-\sigma}^{b_S} \left( b_N(u)y_{1/2} - b_N(v)y_{1/2} \right)} \,\mathrm{d}\sigma
		\]
		which allows us to estimate
		\[
			\left\Vert \varphi_{\frac{\tau}{2}}^A \circ \varphi_{\frac{\tau}{2}}^{b(u)}(y_{1/2}) - \varphi_{\frac{\tau}{2}}^A \circ \varphi_{\frac{\tau}{2}}^{b(v)}(y_{1/2})	\right\Vert \leq C \tau \Vert b_N(u)-b_N(v) \Vert \Vert y_{1/2} \Vert,
		\]
		where $C$ depends on $\Vert \varphi_{\frac{\tau}{2}}^A \Vert$ and $\Vert \varphi_{\frac{\tau}{2}}^{b_S}	\Vert$ but not on $\Vert b_S \Vert$.

The proof is completed by employing the same arguments used in the proof of Theorem~\ref{thm:iteration}.
	\end{proof}
	Therefore, we have shown that, in the situation described, the step size can be chosen independently of the stiff part of the problem. This is of interest in some applications, where the inclusion of $b_S$ in the operator $A$ would result in partial flows that are more difficult to compute, or where a conservation property of the problem under consideration is destroyed if $b_S$ is treated separately from the nonlinearity $b_N$.

	To conclude this section let us briefly discuss the Brusselator (which is described in \cite[Chap. IV.1]{hairer2}). In this case we have a discretized diffusion-reaction equation, where the flow corresponding to $A$ can be computed very efficiently by employing fast Fourier transform techniques. The remaining stiffness in the system is then only due to the linear part of the flow corresponding to $b$. In addition, an analytical expression of the partial flow corresponding to the nonlinearity is not easily attainable (due to the coupling of the equations involved). Therefore, the problem is of the form considered in this section. The implementation and analysis of such methods in the context of partial differential equations is the subject of further research.

	\section{Applications} \label{sec:application}
  	
	In this section, we discuss three applications of the schemes constructed in the previous sections. First, we consider a Hamiltonian system that describes the movement of a charged particle in an inhomogeneous electromagnetic field. This system will turn out to require fewer iterations for a desired order of symmetry as compared to the worst case described in section \ref{sec:composition}. Second, we consider a post-Newtonian approximation to the relativistic Kepler problem. Also in this case we will observe that fewer iterations are necessary, compared to the worst case, to construct schemes of order four and six. Third, a nonlinear population model is considered. This model, in fact, exhibits the worst case behavior as outlined in section \ref{sec:composition}. Nevertheless, we can show, by conducting numerical experiments, that in all three examples the use of high order methods results in a significant performance increase.

	{
	In all the simulations conducted, we compute a reference solution by using the (classic) Strang splitting scheme and a sufficiently small (experimentally determined) step size.
	}
  	
  	\subsection{A charged particle in an inhomogeneous magnetic field} \label{sec:chargedparticle}
  	
	The equations of motion of a charged particle in an external electromagnetic field are given by the Lorentz force law
	\[
		m \ddot{x} = q(E+v \times B),
	\]
	where $x$, $v$, $q$ are the particle's position, velocity, and charge, respectively; the electric field is denoted by $E$ and the magnetic field by $B$ (both can depend on the position of the particle under consideration, i.e.~on $x$). This differential equation can be reformulated as a Hamiltonian system with Hamiltonian
	\[
		H = \frac{p^2}{2m} + q \phi,
	\]
	where the electric potential $\phi$ is related to the electric field by $E = -\nabla \phi$. We should note that the momentum $p=m v$ used above is not the conjugate variable to the position (as would be the case in the electrostatic limit).
	
	The equations of motion in this framework are then given by 	
	\begin{align*}
		\dot{x} &= p/m \\
		\dot{p} &= F(x) + \Omega(x)p,
	\end{align*}
	where
	\[	
		F = q E, \qquad
		\Omega = \left[\begin{array}{ccc}
0 & \tilde{B}_{3} & -\tilde{B}_{2}\\
-\tilde{B}_{3} & 0 & \tilde{B}_{1}\\
\tilde{B}_{2} & -\tilde{B}_{1} & 0
\end{array}\right]
	\]	
	with $\tilde{B}_i = q B_i / m$. To set up the splitting, we use
	\[
		A(x,p) = \left[\begin{array}{c}
0\\
F(x)
\end{array}\right], \qquad
{
	b(x_{\star},p_{\star}) = \left[\begin{array}{cc} 0& \frac1m I \\[1mm] 0 & \Omega(x_{\star})
\end{array}\right],\qquad d=0}
	\]
	and therefore
	\[
		\varphi_{\tau}^A(x_0,p_0) =\left[\begin{array}{c}
x_{0}\\
p_{0}+\tau F(x_{0})
\end{array}\right]
	\]
	whereas the second partial flow can be computed exactly once we substitute $x_{\star}$ (and thus consider $\Omega$ to be constant). The analytic expression is given by
	\begin{equation} \label{eq:exact_B}
		\varphi_{\tau }^{b(x_{\star})}(x_0,p_0) =\left[\begin{array}{c}
				 \frac{1}{m} \int_0^{\tau}\exp\left({s\Omega(x_{\star})}\right) p_{0}\,\mathrm{d}s + x_{0}\\
\exp\left({\tau\Omega(x_{\star})}\right) p_{0}
\end{array}\right].
	\end{equation}
	For actual computations we can use  	
	\[
		 \exp(\tau\Omega)=I+\frac{\sin\tau\Vert\tilde{B}\Vert_{2}}{\Vert\tilde{B}\Vert_{2}}\Omega+\frac{1-\cos\tau\Vert\tilde{B}\Vert_{2}}{\Vert\tilde{B}\Vert_{2}^{2}}\Omega^{2}
	\]  	
	and
	\[
		\int_0^{\tau} \exp(s\Omega) \,\mathrm{d}s =
		\tau I+\frac{1-\cos\tau\Vert\tilde{B}\Vert_{2}}{\Vert\tilde{B}\Vert_{2}^{2}}\Omega+\frac{\tau\Vert\tilde{B}\Vert_{{{2}}}-\sin\tau\Vert\tilde{B}\Vert_{2}}{\Vert\tilde{B}\Vert_{2}^{3}}\Omega^{2},
	\]
{where both $\Omega$ and $\tilde{B}$ depend on $x_{\star}$; this dependence is, for the sake of brevity, omitted in the notation used}. Thus, we have fulfilled all the requirements outlined in section \ref{sec:introduction}. Note that for a uniform magnetic field a number of symmetric second order schemes are available (see, e.g.~\cite{spreiter1999}). However, for non-uniform magnetic fields such schemes can not be employed to get higher order schemes by composition.
	
	Let us now discuss a peculiarity of the system under consideration. As $B(\cdot)$ does only depend on the position component of the phase space and the evolution operator $\varphi_{\tau}^{A}$ does not depend on the momentum (which is a consequence of the specific splitting conducted here), we have
	\[
		\int_0^{\tau} \exp\left({s\Omega(x_{2})}\right) p_{0}\,\mathrm{d}s
		- \int_0^{\tau} \exp\left({s\Omega(x_{1})}\right) p_{0}\,\mathrm{d}s
		= \mathcal{O}\left( \tau^2 \Vert \Omega(x_2) - \Omega(x_1) \Vert \right).
	\]
	That is, the Lipschitz constant for our fixed-point iteration is of order $\tau^2$. However, such a result is not entirely unexpected as it is quite common that the position is integrated in time with a higher order than the momentum component (this is also true for the popular leapfrog scheme, for example). Therefore, any resulting approximation to $y_{1}=(x_1,p_1)$ is of order $2\ell+1$, for some \mbox{$\ell\in\mathbb{N}$}, in position and, as we can easily deduce from equation \eqref{eq:exact_B}, the momentum is then approximated up to order $2\ell$. Therefore, to use the notation from section \ref{sec:almost_symmetric_strang} we have a symmetric scheme of order $q=2\ell-1$.
	
	Thus, three iterations are sufficient to get a fourth order scheme whereas four iterations suffice to get a sixth order scheme. This is clearly below the worst case behavior discussed in section \ref{sec:composition}.
	
	We now turn our attention to the presentation of the numerical simulations conducted. As an example we will use an electric field configuration that corresponds to an ideal Penning trap (such as described in \cite{kretzschmar2000}). However, we will use a magnetic field that is not homogeneous in space. {Further we} will use natural units for the problem, i.e., $m$ and $q$ are set to unity. For the ideal Penning trap the electric potential is given by
	\[
		\phi(x) = \frac{1}{20}\left( 2 x_3^2 - x_1^2 - x_2^2 \right).
	\]
	In order to impose an inhomogeneous magnetic field, we use
	\[
		B(x) = \left[ \tfrac{1}{10}x_3, \tfrac{1}{10} x_2, 100 \sin x_3+x_2 \right]^{\sf{T}}.
	\]
	We consider an initial value in both position as well as momentum close to zero and evolve the system until time $T=100$. In Figure \ref{fig:chargedparticle_error_vs_tau} we show that the numerical experiments match the expected order for the splitting schemes discussed in section \ref{sec:composition}. Note, however, that for the composite 9 scheme only three iterations are required to reach order six (instead of the four predicted above). This is a clear indication at the presence of further simplifications (in the system under consideration).
	Now let us turn our attention to run time considerations. In Figure \ref{fig:chargedparticle_time_vs_error} the run time is plotted against the achieved accuracy.
	It is clear from that figure that even for moderate precision requirements, high order methods provide a significant advantage over the more commonly employed Strang splitting scheme.
	
	\begin{figure}[t]
		\centering
		\resizebox{0.73\textwidth}{!}{\small \input{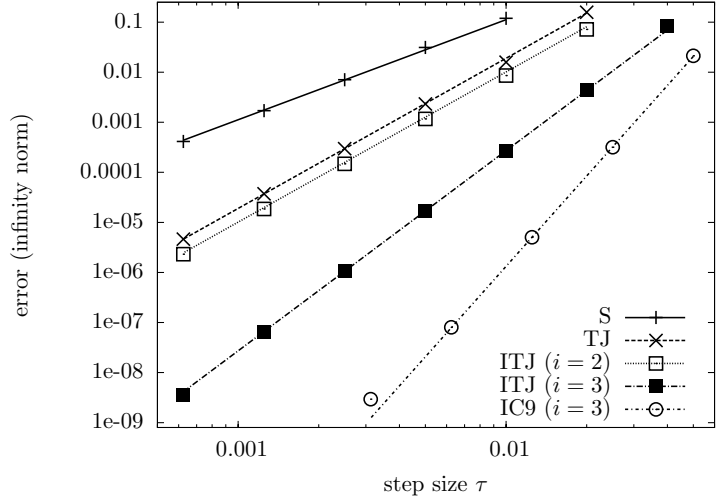}}
		\caption{Order plot for a charged particle in an inhomogeneous magnetic field (the results for various splitting schemes are shown). The lines drawn are, from top to bottom, of slope $2$, $3$, $3$, $4$, and $6$ respectively. The abbreviations for the different numerical schemes are listed in Table \ref{tbl:effort}. \label{fig:chargedparticle_error_vs_tau}}
	\end{figure}
  	\begin{figure}[t]
		\centering
		\resizebox{0.74\textwidth}{!}{\small \input{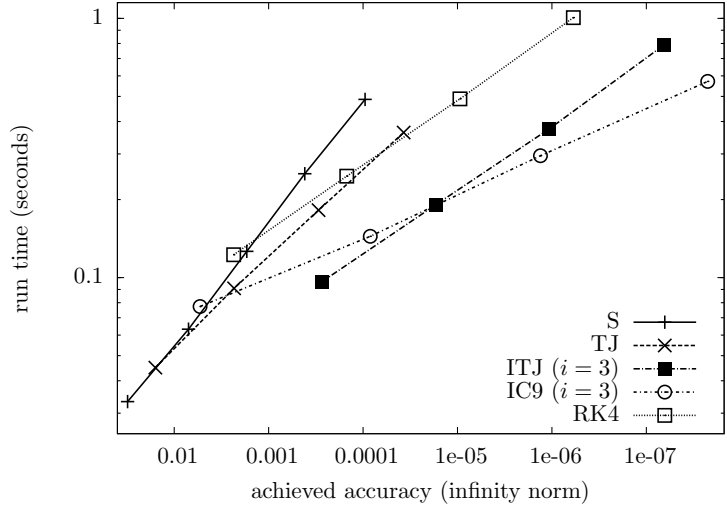}}
		\caption{Run time as a function of the achieved accuracy for a charged particle in an inhomogeneous magnetic field (the results for various splitting schemes are shown). The abbreviations for the different numerical schemes are listed in Table \ref{tbl:effort}. For comparison, the standard Runge--Kutta scheme of order four (RK4) is also shown.  \label{fig:chargedparticle_time_vs_error}}
	\end{figure}  	
	The system under consideration is Hamiltonian; therefore, the energy is exactly conserved. This is, in general, no longer true if a numerical scheme is considered. However, schemes can be engineered which, to machine precision, conserve the energy. It is clear that this is not true in this case as the partial flows do not conserve the energy (Figure \ref{fig:chargedparticle_conservation} confirms this behavior). However, the error in energy is still four orders of magnitude below the integration error made by the scheme under consideration.

	\begin{figure}[H]
		\centering
		\resizebox{0.73\textwidth}{!}{\small \input{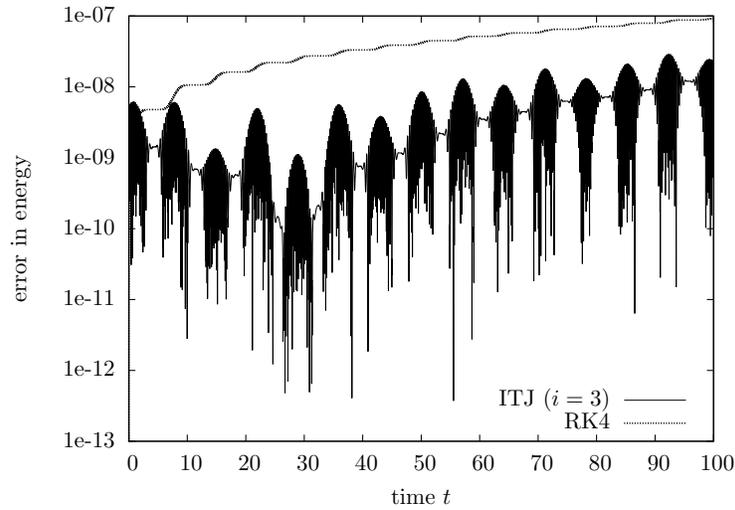}}
		\caption{Energy conservation for a charged particle in an inhomogeneous magnetic field, where the iterated triple jump scheme ($i=3$) with $\tau=0.01$ is employed. This results in an error (in the infinity norm) in the position/momentum that is approximately $3\cdot 10^{-4}$. For comparison, the standard Runge--Kutta method of order four is shown. There the step size $\tau=0.0015$ is chosen, which results in a comparable accuracy and twice the run time. Note, however, that the error in energy is better by an order of magnitude for the iterated triple jump scheme. \label{fig:chargedparticle_conservation}}
	\end{figure}
  	
	To end this section let us note that the discussion here can easily be generalized to multiple particles. This is still true if particle-particle interactions (via the electric or magnetic field, for example) are considered.  	 
  	
	\subsection{Post-Newtonian Kepler problem} \label{sec:postnewtonian}
	As a second example we consider the post-Newtonian\footnote{The equations of motion in the post-Newtonian approximation are determined by expanding the field equations of general relativity for point objects in powers of $1/c^2$.} approximation to the (general) relativistic $n$-body problem. In this section we will limit ourselves to the relativistic Kepler problem in the Post-Newtonian approximation up to terms of order $1/c^4$, where $c$ denotes the speed of light. The equations of motions for the first body are then given by (see, e.g.~\cite{blanchet:2001})
	\begin{align*}
		\dot{r}_1 &= v_1  \\
		\dot{v}_1 &=
		-\frac{\mu_2}{r_{12}^2}n_{12} + \frac{1}{c^2}\left( 5 \frac{\mu_1 \mu_2}{r_{12}^3} + 4\frac{\mu_2^2}{r_{12}^3} \right) n_{12} \\
		&\qquad + \frac{1}{c^2}\frac{\mu_2}{r_{12}^2} \left( \frac{3}{2} (n_{12}\cdot v_2)^2 - v_1^2 + 4v_1\cdot v_2 -2 v_2^2 \right) n_{12} \\
		&\qquad + \frac{1}{c^2}\frac{\mu_2}{r_{12}^2} \left( 4 n_{12}\cdot v_1 - 3n_{12} \cdot v_2 \right) (v_1 - v_2),
	\end{align*}
	where $r_{12} = \Vert r_1 - r_2 \Vert_{2}$, $n_{12} = \tfrac{1}{r_{12}} (r_1 - r_2)$, and $\mu_i = G m_i$ is the standard gravitational parameter (which can be computed from the gravitational constant $G$ and the mass of the body $m_i$). The equations of motion for the second body can then be determined by interchanging the indices corresponding to the first and the second body in the equations of motion stated above. Let us note that the Newtonian equations of motion are recovered in the limit as $c\to\infty$ (in this case only the first force term remains).
	The structure of the equations of motion naturally lends itself to the splitting scheme described in section \ref{sec:introduction}. To that end let us define
	\[
		A(r_1,v_1,r_2,v_2) =
	\left[\begin{array}{c}
					0\\
					-\frac{\mu_{2}}{r_{12}^{2}}n_{12}+
					 \frac{1}{c^2}\left(5\frac{\mu_{1}\mu_{2}}{r_{12}^{3}}+4\frac{\mu_{2}^{2}}{r_{12}^{3}}\right)n_{12}\\
					0\\
					\frac{\mu_{1}}{r_{12}^{2}}n_{12}-\frac{1}{c^2}\left( 5\frac{\mu_{1}\mu_{2}}{r_{12}^{3}}+4\frac{\mu_{1}^{2}}{r_{12}^{3}}\right)n_{12}
				\end{array}\right]
	\]
	and
	\[
		b(r_{1\star},v_{1\star},r_{2\star},v_{2\star})=\left[\begin{array}{cccc}
				0 & 1 & 0 & 0\\
				K_{1} & L_{1} & -K_{1} & -L_{1}\\
				0 & 0 & 0 & 1\\
				K_{2} & L_{2} & -K_{2} & -L_{2}
			\end{array}\right],
	\]
	where
	\begin{align*}
		K_1 &= \frac{1}{c^2}\frac{\mu_2}{r_{12\star}^3} \left( \frac{3}{2} (n_{12\star}\cdot v_{2\star})^2 - v_{1\star}^2 + 4v_{1\star}\cdot v_{2\star} -2 v_{2\star}^2 \right), \\
		L_1 &= \frac{1}{c^2}\frac{\mu_2}{r_{12\star}^2} \left( 4 n_{12\star}\cdot v_{1\star} - 3n_{12\star} \cdot v_{2\star} \right).
	\end{align*}
	The corresponding quantities $K_2$ and $L_2$ can once again be obtained by reversing the indices corresponding to the first and second body. It is clear that the flows corresponding to both $A$ and $B(r_{1\star},v_{1\star},r_{2\star},v_{2\star})$, as defined above, can be computed efficiently.
	
	In the subsequent discussion, we will employ the SI system of units (for convenience we will not state the units explicitly). Let us consider the orbit of two celestial objects with $\mu_1 = 10^{26}$, i.e., approximately $0.75\cdot 10^6$ solar masses, and $\mu_2=10^{20}$. We initialize the first body with zero velocity and the second one with $v_2 = 5.898\cdot 10^6$ and place it at the perihelion of the orbit which we determine to be $r_2 = 4.6\cdot 10^{10}$, i.e., a mercury like orbit. We integrate the equations of motion up to the final time $T=10^6$, which corresponds to about fifteen orbits. The order plots for a number of schemes are shown in Figure \ref{fig:postnewtonian_error_vs_tau}.

	The number of iterations necessary for the iterated triple jump scheme (ITJ) as well as the iterated composite 9 scheme (IC9) have been determined by conducting numerical experiments. A theoretical analysis is beyond the scope of this paper. Note, however, that similar to the previous example we do not observe the worst case behavior described in section \ref{sec:composition}.

In addition, let us investigate the run time as a function of the error. This is shown in Figure \ref{fig:postnewtonian_time_vs_error}.
As is apparent from the figure, the fourth order iterated triple jump scheme (ITJ) is superior to both the third order triple jump scheme and the Strang splitting scheme.
For medium accuracy requirement it becomes advantageous to employ the sixth order IC9 scheme.

		\begin{figure}[H]
			\centering
			\resizebox{0.73\textwidth}{!}{\small \input{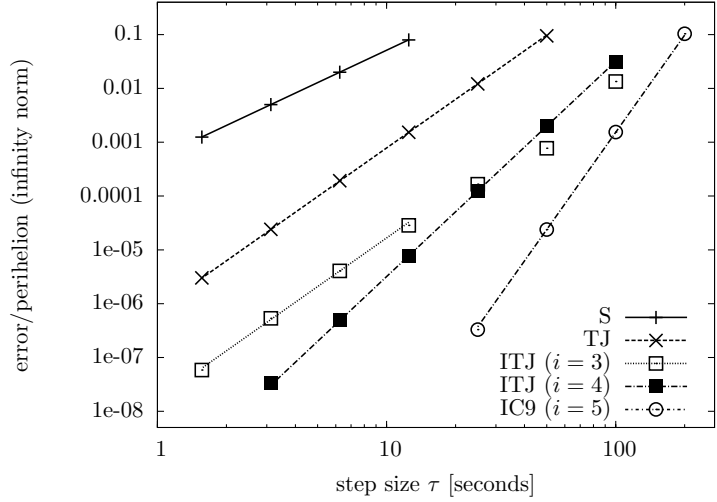}}
			\caption{ Order plots for a post-Newtonian Kepler problem (the results for various splitting schemes are shown). The lines drawn are, from top to bottom, of slope 2, 3, 3, 4, and 6 respectively. The error is scaled to the perihelion (the point of least distance between the two bodies) of the orbit. The abbreviations for the different numerical schemes are listed in Table \ref{tbl:effort}.  \label{fig:postnewtonian_error_vs_tau}}
		\end{figure}
			\begin{figure}[H]
			\centering
			\resizebox{0.73\textwidth}{!}{\small \input{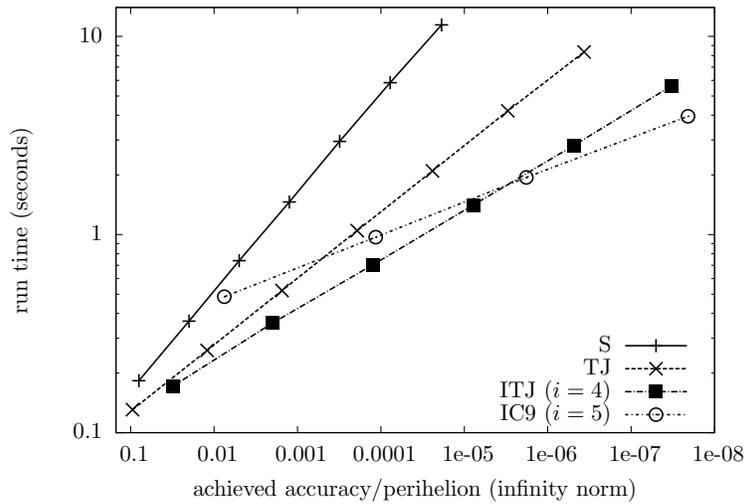}}
			\caption{ Run time as a function of the achieved accuracy for a post-Newtonian Kepler problem (the results for various splitting schemes are shown). The error is scaled to the perihelion of the orbit. The abbreviations for the different numerical schemes are listed in Table \ref{tbl:effort}. \label{fig:postnewtonian_time_vs_error}}
		\end{figure}

	\subsection{A nonlinear population model} \label{sec:nonlinerpopulation}

	As a third example, we consider a nonlinear population model (the so called May model) that is given by
	\begin{align*}
		x' &= a x \left( 1 - \frac{x}{b} \right) - \frac{c x y}{x+d} \\
		y' &= e y - \frac{y^2}{f x},
	\end{align*}
	where in line with \cite{callahan93} we use $a=0.6$, $b=10.0$, $c=0.5$, $d=1.0$, $e=0.1$, and $f=2.0$. In this context, $x$ is interpreted as a (appropriately scaled) prey population while $y$ represents the predator population. We can argue that such an equation lends itself to splitting as if interaction effects are neglected we are usually left with either an exponential growth model or a logistic equation in each variable. In fact, this is the case for the equation stated above, since the decoupled system can be written as
	\[
		\left[\begin{array}{c}
x'\\
y'
\end{array}\right]=
{A(x,y)}
=\left[\begin{array}{c}
ax\left(1-\frac{x}{b}\right)\\
ey
\end{array}\right],
	\]
	of which an analytical solution can easily be found; it is given by
	\begin{align*}
		x(t) &= \frac{b \e^{a t}}{\e^{at} - 1 + \frac{b}{x(0)}} \\
		y(t) &= \e^{ e t} y(0).	
	\end{align*}
  	To complete our splitting scheme, we set \[
b\left(x_{\star},y_{\star}\right)\left[\begin{array}{c}
x\\
y
\end{array}\right]=\left[\begin{array}{c}
-\frac{cy_{\star}}{x_{\star}+d}x\\
-\frac{y_{\star}}{fx_{\star}}y
\end{array}\right]
  	\]
	which once again is exactly the situation described in section \ref{sec:introduction}.

	One might rightfully object that our splitting approach is somewhat artificial as we can simply add the $A$ operator to the $B$ operator. After all, the resulting operator still has the desired form and splitting would not be necessary. The only potential advantage of using the splitting scheme is that we can solve the flow corresponding to $A$ exactly. Although numerical experiments demonstrates that this can result in a significant increase in performance, the goal of this section is to show that the number of iterations given in section \ref{sec:composition} constitutes a sharp bound.
	
	To investigate that behavior let us choose the initial values $x(0)=100$ and $y(0)=20$, i.e., the prey population is significantly larger than the predator population. In Figure~\ref{fig:maymodel_time_vs_error} we plot the run time as a function of the error for a number of schemes discussed so far (we integrate up to $t=5$).

	It is also clear that contrary to the example discussed in the previous section, high order schemes (beyond triple jump) are only advantageous if very high precision is needed; however, this behavior is not surprising as the solution approaches a steady state quite rapidly.

	Therefore, let us now turn our attention to the number of iterations necessary to obtain a given order. In Figure \ref{fig:maymodel_error_vs_tau} we can clearly see that the behavior described in section \ref{sec:composition} is regained. Thus, the system under consideration does not possess the simplifying property we discussed in section \ref{sec:chargedparticle} for the charged particle and in section \ref{sec:postnewtonian} for the post-Newtonian approximation. We can also conclude that the number of iterations given in section \ref{sec:composition} constitutes a sharp bound.
 	
	\begin{figure}[H]
		\centering
		\resizebox{0.73\textwidth}{!}{\small \input{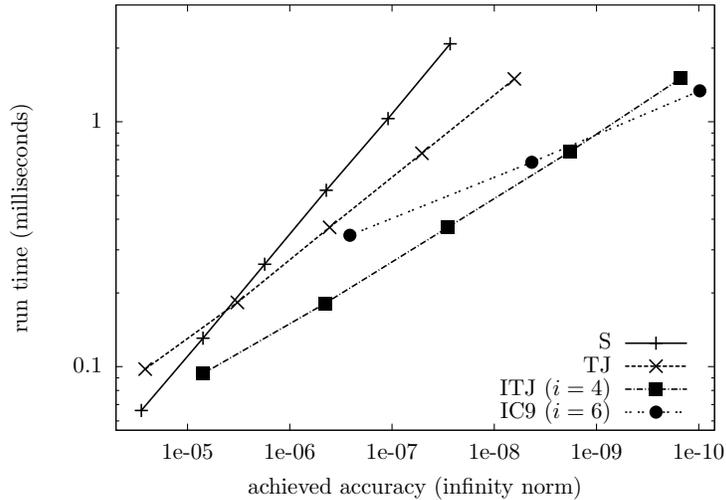}}
		\caption{ Run time as a function of the achieved accuracy for the May model. The abbreviations for the different numerical schemes are listed in Table \ref{tbl:effort}. \label{fig:maymodel_time_vs_error}}
	\end{figure}

	\begin{figure}[H]
		\centering
		\resizebox{0.73\textwidth}{!}{\small \input{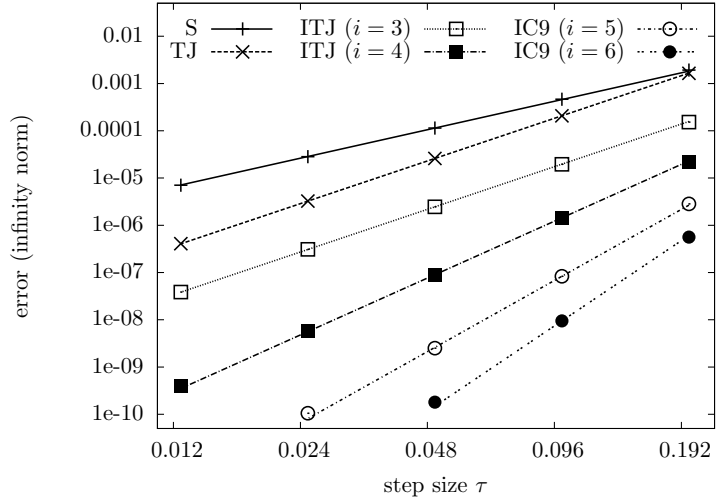}}
		\caption{Order plot for the May model (the results for various splitting schemes are shown). The lines drawn are, from top to bottom and left to right, of slope $2$, $3$, $3$, $4$, $5$, and $6$ respectively. The abbreviations for the different numerical schemes are listed in Table \ref{tbl:effort}. \label{fig:maymodel_error_vs_tau}}
	\end{figure}

   As in the previous section we note that generalizations, for example the inclusion of multiple predator species, can be easily accomplished in the context of the schemes discussed.

   	\section{Conclusion} \label{sec:conclusion}
  	
	Besides providing a theoretical analysis, we have conducted numerical simulations that demonstrate the applicability of {composition} schemes to three examples of interest in the sciences. In all of these examples we have demonstrated that, depending on the accuracy requirement, the high order schemes constructed in this paper can provide significant gains in performance compared to Strang splitting. For a charged particle in an inhomogeneous field, we have also demonstrated increased efficiency as well as better conservation properties as compared to the standard fourth order Runge--Kutta method.

  	\bibliography{paper_ode}
    \bibliographystyle{model1-num-names}

\end{document}